\theoremstyle{definition}
\newtheorem{defi}{Definition}[section]
\theoremstyle{plain}
\newtheorem{lemm}[defi]{Lemma}
\newtheorem{prop}[defi]{Proposition}
\theoremstyle{remark}
\newtheorem{rema}[defi]{Remark}
\newtheorem*{rema*}{Remark}
\newtheorem*{exam*}{Example}
\newcommand{\kks}[2][k]{g^{(#1)}_{#2}}
\newcommand{\PP}{\mathcal{P}}
\newcommand{\la}{\lambda}
\newcommand{\La}{\Lambda}
\newcommand{\de}{\delta}
\newcommand{\DE}[1]{\de\left[#1\right]}
\newcommand{\Z}{\mathbb{Z}}
\newcommand{\lra}{\longrightarrow}
\newcommand{\sm}{\setminus}
\newcommand{\ti}{\tilde}
\renewcommand{\emptyset}{\varnothing}
\newcommand{\tikzitem}[2][1.0]{
	\begin{tikzpicture}[scale=#1]
		#2
	\end{tikzpicture}
}
\newcommand{\tikzbox}[3][]{
	\draw [#1] (#3,#2) rectangle +(-1,-1);
}
\newcommand{\tikzydiag}[2][]{
	\foreach \li [count=\i] in {#2}{
		\foreach \j in {1,...,\li}{
			\tikzbox[#1]{\i}{\j};
		}
	}
}
\newcommand{\tikzyfill}[2][gray]{
	\foreach \li [count=\i] in {#2}{
		\foreach \j in {1,...,\li}{
			\fill[#1] (\j,\i) rectangle +(-1,-1);
		}
	}
}
\newcommand{\Todo}[1]{}
\newcommand{\gs}[1]{\widetilde{g}_{#1}}
\newcommand{\Gs}[1]{\widetilde{G}_{#1}}
\newcommand{\hatone}{\hat{1}}
\newcommand{\wt}{\widetilde}
\newcommand{\wh}{\widehat}
\newcommand{\hs}[2][]{\mathrm{HS}_{#1}(#2)}
\newcommand{\whs}[2][]{\wh{\mathrm{HS}}_{#1}(#2)}
\newcommand{\Gr}{\mathrm{Gr}}
\newcommand{\Kth}[1][k,n]{K^*(\Gr(#1))}
\newcommand{\Kho}[1][k,n]{K_*(\Gr(#1))}
\newcommand{\mc}{\mathcal}
\newcommand{\mcO}{\mc{O}}
\newcommand{\mcI}{\mc{I}}
\newcommand{\skewyd}[3][1.0]{
	\tikzitem[#1]{
		\tikzyfill{#3}
		\tikzydiag{#2}
	}
}
\newcommand{\dgYoungDiagSize}{.2}
\newcommand{\yd}[1]{
	\ifthenelse{\equal{#1}{}}{
		\emptyset
	}
	{
		\skewyd[\dgYoungDiagSize]{#1}{}
	}
}
\newcommand{\TitleContent}
{On the Pieri rules of stable and dual stable Grothendieck polynomials}
\title{\TitleContent}
\author{Motoki Takigiku}
\date{\today}
\newcommand{\AbstractContent}{
	We give an explanation for the Pieri coefficients for the stable and dual stable Grothendieck polynomials; their non-leading terms are obtained by taking an alternating sum of meets (or joins) of their leading terms.
}
\newcommand{\AcknowledgementContent}{
    The author would like to thank
    Takeshi Ikeda
    for communicating to him the idea of considering 
    the class of the structure sheaves of Schubert varieties
    in the $K$-homology of the affine Grassmannian
    when the author did a study on $\kks{\la}$,
    which is where
    the idea of taking the sum $\sum_{\mu\subset\la}g_\la$ originally came from.
    The author is also grateful to
    Itaru Terada
    for many valuable discussions and comments.
    This work was supported by
    the Program for Leading Graduate
    Schools, MEXT, Japan.
}
\begin{document}

\maketitle

\begin{abstract}
	\AbstractContent
\end{abstract}

%\setcounter{tocdepth}{2}
%\tableofcontents

\section{Introduction}

The stable Grothendieck polynomials $G_\la$
and the dual stable Grothendieck polynomials $g_\la$
are certain families of inhomogeneous symmetric functions
parametrized by interger partitions $\la$.
They are certain $K$-theoretic deformations of the Schur functions
and dual to each other via the Hall inner product.

Historically
the stable Grothendieck polynomials (parametrized by permutations)
were introduced by Fomin and Kirillov \cite{MR1394950}
as a stable limit of the Grothendieck polynomials of
Lascoux--Sch\"utzenberger \cite{MR686357}.
In \cite{MR1946917} Buch gave a 
combinatorial formula for the stable Grothendieck polynomials $G_\la$
for partitions using so-called set-valued tableaux,
and showed that 
their span $\bigoplus_{\la\in\PP}\Z G_\la$ is a bialgebra and
its certain quotient ring
is isomorphic to the $K$-theory of the Grassmannian
$\mathrm{Gr}=\mathrm{Gr}(k,\mathbb{C}^{n})$.

The dual stable Grothendieck polynomials $g_\la$
were
introduced by Lam and Pylyavskyy \cite{MR2377012}
as generating functions of reverse plane partitions,
and shown to be the dual basis for 
$G_\la$
via the Hall inner product.
They also showed there that
$g_\la$ represent the $K$-homology classes of ideal sheaves of 
the boundaries of Schubert varieties in the Grassmannians.

The Pieri rule for $G_\la$ \eqref{eq:G:Pieri} was given in \cite{MR1763950},
and that for $g_\la$ \eqref{eq:g_Pieri_hs} was given in \cite{MR1946917} 
as a formula for coproduct structure constants of $G_\la$.
Both formulas involve certain binomials coefficients,
and we show in this paper that these coefficients are
the values of the M\"obius functions of certain posets of horizontal strips (Lemma \ref{theo:mobius})
and hence the Pieri formulas are written as alternating sums of meets/joins of the leading terms
(Proposition \ref{theo:Pieris} and \ref{theo:G:Pieri}).
We also explain in Section \ref{sect:Ggsums} that 
the linear map $g_\la\mapsto\sum_{\mu\subset\la}g_\mu$ ($=:\gs{\la}$) is a ring automorphism and
the linear map $G_\la\mapsto\sum_{\mu\supset\la}G_\mu$ ($=:\Gs{\la}$) is a multiplication map.
With these bases, the Pieri rules are rewritten as certain multiplicity-free sums (\eqref{eq:gsum} and \eqref{eq:Gs:Pieri:G} in Proposition \ref{theo:Pieris} and \ref{theo:G:Pieri}).

\subsection*{Acknowledgment}
\AcknowledgementContent

\section{Stable and dual stable Grothendieck polynomials }\label{sect:Prel::gla}

For basic definitions for symmetric functions,
see for instance \cite[Chapter I]{MR1354144}.

Let
$\PP$ be the set of integer partitions.
For partitions $\la,\mu\in\PP$,
the inclusion $\la\subset\mu$ means $\la_i\le\mu_i$ for all $i$,
and $\la\cap\mu$ and $\la\cup\mu$ ($\in\PP$) are given by
$(\la\cap\mu)_i=\min(\la_i,\mu_i)$ and
$(\la\cup\mu)_i=\max(\la_i,\mu_i)$ for all $i$.
In other words, $\cap$ and $\cup$ are the meet and join of the poset $(\PP,\subset)$.

Let $\La$ be the ring of symmetric functions,
namely consisting of all symmetric formal power series
in variable $x=(x_1,x_2,\dots)$
with bounded degree.
Let $\wh\La$ be its completion,
consisting of all symmetric formal power series
(with unbounded degree).

In \cite[Theorem 3.1]{MR1946917}
Buch gave a combinatorial description of
the {\em stable Grothendieck polynomial} $G_\la$
as a (signed) generating function of so-called {\em set-valued tableaux}.
We do not review the detail here
and just recall some of its properties:
$G_\la\in\wh\La$ (although $G_\la\notin\La$),
$G_\la$ is an infinite linear combination of the Schur functions $\{s_\mu\}_{\mu\in\PP}$
and its lowest degree component is $s_\la$
(hence $\wh\La=\prod_{\la\in\PP}\Z G_\la$).
Moreover
the span
$\bigoplus_\la \Z G_\la$ ($\subset\wh\La$) is a bialgebra,
in particular the expansion of 
the product
$G_\mu G_\nu = \sum_{\la} c^{\la}_{\mu\nu} G_\la$ and
the coproduct
$\Delta(G_\la) = \sum_{\mu,\nu} d^{\la}_{\mu\nu} G_\mu\otimes G_\nu$
are finite.
The {\em dual stable Grothendieck polynomial} $g_{\la}$ (for $\la\in\PP$)
is defined 
in \cite{MR2377012}
as the generating function of so-called
\textit{reverse plane partitions} of shape $\la$.
It is also shown there that
$g_{\la}\in\La$ %is a symmetric function,
and $g_{\la}$ has 
the highest degree component $s_{\la}$
and thus forms a $\Z$-basis of $\La$.
Moreover $g_\la$ is dual to $G_\la$:
it holds 
$(G_\la,g_\mu)=\delta_{\la\mu}$
where
$(\,,)\colon\wh\La\times\La\lra\Z$ is
the Hall inner product.
Hence the product (resp.\,coproduct) structure constants for $G_\la$ coincide with
the coproduct (resp.\,product) structure constants for $g_\la$:
it holds
$g_\mu g_\nu = \sum_{\la} d^{\la}_{\mu\nu} g_\la$ and
$\Delta(g_\la) = \sum_{\mu,\nu} c^{\la}_{\mu\nu} g_\mu\otimes g_\nu$.

\subsection{Pieri rules}

The (row) Pieri formula for $G_\la$ was given by Lenart \cite[Theorem 3.2]{MR1763950}:
for any partition $\la\in\PP$ and integer $a\ge 0$,
%\[
%c^{\mu}_{\la,(a)} 
%= (-1)^{|\mu/\la|-a} \binom{r(\mu/\la)-1}{|\mu/\la|-a},
%\]
\begin{equation}\label{eq:G:Pieri}
	G_{(a)} G_\la = 
	\sum_{\mu/\la \text{: horizontal strip}}
	(-1)^{|\mu/\la|-a}
	\binom{r(\mu/\la)-1}{|\mu/\la|-a}
	G_\mu,
\end{equation}
where $r(\mu/\la)$ denotes the number of the rows in the skew shape $\mu/\la$.
Subsequently,
the (row) Pieri formula for $g_\la$ is given
in \cite[Corollary 7.1]{MR1946917}
(as a formula for $d^{\mu}_{\la,(a)}$, the coproduct structure constants for $G_\la$):
\begin{equation}
g_{(a)} g_\la 
=
\sum_{\mu/\la\text{$:$ horizontal strip}}
(-1)^{a-|\mu/\la|}
\binom{r(\la/\bar\mu)}{a-|\mu/\la|}
g_\mu,
\label{eq:g_Pieri_hs}
\end{equation}
where $\bar\mu=(\mu_2,\mu_3,\dots)$.

\subsection{Their sums}\label{sect:Ggsums}

For $\la\in\PP$
we let
\[
	\gs{\la} = \sum_{\mu\subset\la} g_\mu \ (\in\La),
	\qquad
	\Gs{\la} = \sum_{\mu\supset\la} G_\mu \ (\in\wh\La).
\]
It is known (see \cite[Section 8]{MR1946917}) that
$(1-G_1)^{-1} = \sum_{\la\in\PP}G_\la$ and
\begin{equation}\label{eq:H(1):G}
(1-G_1)^{-1} G_\la = \sum_{\mu\supset\la} G_\mu \ \big(= \Gs{\la}\big).
\end{equation}
It is also easy to see that $1-G_1 = \sum_{i\ge 0}(-1)^{i} e_i$
and hence $(1-G_1)^{-1}=\sum_{i\ge 0}h_i$ ($=:H(1)$),
where $e_i$ and $h_i$ are the elementary and complete symmetric functions.

Recall the notation
$F^\perp(f) = \sum (F,f_1) f_2$ for $F\in\wh\La$, $f\in\La$ and
$\Delta(f)=\sum f_1\otimes f_2$ with the Sweedler notation,
and that the multiplication map by $F$ is the dual map of $F^\perp$.
From this, \eqref{eq:H(1):G} and 
that $G_\la$ and $g_\la$ are dual,
we see that
$H(1)^\perp(g_\la)=\gs{\la}$.
Besides it is known (see \cite[Chapter 1.5, Example 29]{MR1354144}) that
$H(1)^\perp(f(x_1,x_2,\cdots)) = f(1,x_1,x_2,\cdots)$ for any $f\in\La$,
and hence $H(1)^\perp$ is a ring morphism.
Since $F^\perp G^\perp=(GF)^\perp$ in general,
that $H(1)$ is invertible implies that so is $H(1)^\perp$.
Hence we have
\begin{prop}
	\label{theo:H(1):perp}
	Let
	$H(1) = \sum_{i\ge 0} h_i$.
	The map
	$H(1)^\perp\colon\La\lra\La$
	is a ring automorphism and
	\begin{align}
	\gs{\la}(x) &= H(1)^\perp (g_\la(x)) = g_\la(1,x).
	\label{eq:H(1):g}
	\end{align}
	where we write $f(x)=f(x_1,x_2,\cdots)$ and 
	$f(1,x)=f(1,x_1,x_2,\cdots)$.
\end{prop}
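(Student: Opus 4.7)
The plan is to verify the two ingredients separately: (i) the formula $H(1)^\perp(g_\la) = \gs{\la} = g_\la(1,x)$, and (ii) the claim that $H(1)^\perp$ is a ring automorphism. The bulk of the argument is really already contained in the paragraphs preceding the proposition, so the proof will mostly consist of citing standard facts and stitching them together.

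First I would establish the identity $H(1)^\perp(g_\la) = \gs{\la}$. Using the adjointness of $F^\perp$ to multiplication by $F$ with respect to the Hall pairing, together with the duality $(G_\la,g_\mu)=\delta_{\la\mu}$, it suffices to test against $G_\mu$: we compute
\[
	(G_\mu, H(1)^\perp g_\la) = (H(1) G_\mu, g_\la) = (\Gs{\mu}, g_\la) = \sum_{\nu\supset\mu} (G_\nu, g_\la),
\]
which equals $1$ if $\mu\subset\la$ and $0$ otherwise, by the identity $(1-G_1)^{-1}G_\mu = \Gs{\mu}$ recalled in \eqref{eq:H(1):G} and the fact that $(1-G_1)^{-1}=H(1)$. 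This matches $(G_\mu, \gs{\la}) = \sum_{\nu\subset\la}(G_\mu,g_\nu)$, so $H(1)^\perp g_\la = \gs{\la}$.

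Next I would invoke the classical identity (Macdonald \cite[Chapter~1.5, Example~29]{MR1354144}) which states that $H(1)^\perp$ acts on $f\in\La$ by setting one extra variable to $1$, i.e.\ $H(1)^\perp(f(x_1,x_2,\dots)) = f(1,x_1,x_2,\dots)$. This immediately gives the second equality $\gs{\la}(x) = g_\la(1,x)$ and, since plugging in $1$ for a variable is a ring homomorphism $\La\to\La$, it also shows $H(1)^\perp$ is a ring morphism.

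Finally I would argue invertibility. The general identity $F^\perp G^\perp = (GF)^\perp$ (which follows from adjointness of $F^\perp$ to multiplication by $F$) reduces invertibility of $H(1)^\perp$ to invertibility of $H(1)$; but $H(1) = (1-G_1)^{-1}$ is literally a unit in $\wh\La$ with inverse $1-G_1 = \sum_{i\ge 0}(-1)^i e_i$, so $(1-G_1)^\perp$ provides a two-sided inverse to $H(1)^\perp$. There is no real obstacle here; the only subtlety worth checking is that $F^\perp$ for $F\in\wh\La$ genuinely lands in $\La$ (which is implicit in the Sweedler notation setup recalled before the proposition), so that the compositions make sense as endomorphisms of $\La$.
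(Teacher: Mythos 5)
Your proposal is correct and follows essentially the same route as the paper: the identity $(1-G_1)^{-1}=H(1)$ together with \eqref{eq:H(1):G} and the duality $(G_\la,g_\mu)=\delta_{\la\mu}$ yields $H(1)^\perp g_\la=\gs{\la}$, the Macdonald specialization fact gives $g_\la(1,x)$ and the ring-morphism property, and $F^\perp G^\perp=(GF)^\perp$ with the unit $1-G_1$ gives invertibility. The only difference is that you spell out the dualization step by pairing against the basis $\{G_\mu\}$, which the paper leaves implicit; this is a harmless (and valid) elaboration.
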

Note that 
we can directly show $\gs{\la}(x_1,x_2,\cdots)=g_\la(1,x_1,x_2,\cdots)$
from the fact that $g_\la$ is a generating function of reverse plane partitions;
see \cite{Takigiku_dualstable2} for more details.
As seen in Section \ref{sect:geom} below,
$\gs{\la}$
correspond to the classes in $K$-homology of the structure sheaves of Schubert varieties in the Grassmannian.

\subsection{$K$-(co)homology of Grassmannians} \label{sect:geom}

	We recall geometric interpretations of $G_\la$ and $g_\la$.
	%%% Grassmannian
	Let 
	$\Gr(k,n)$ be the Grassmannian of $k$-dimensional subspaces of $\mathbb{C}^n$,
	$R=(n-k)^k$ the rectangle of shape $(n-k)\times k$,
	and $\mcO_\la$ (for $\la\subset R$) the structure sheaves of Schubert varieties of $\Gr(k,n)$.
	The $K$-theory $\Kth$,
	the Grothendieck group of algebraic vector bundles on $\Gr(k,n)$,
	has a basis 
	$\{[\mcO_\la]\}_{\la\subset R}$, %form a basis of $\Kth$ 
	and the surjection
	$\bigoplus_{\la\in\PP} \Z G_\la \lra \Kth = \bigoplus_{\la\subset R} \Z [\mcO_\la]$
	that maps $G_\la$ to $[\mcO_\la]$
	(which is considered as $0$ if $\la\not\subset R$)
	is an algebra homomorphism \cite{MR1946917}.

	There is another basis of $\Kth$ consisting of the classes $[\mcI_\la]$
	of ideal sheaves of boundaries of Schubert varieties.
	In \cite[Section 8]{MR1946917} it is shown that
	the bases $\{[\mcO_\la]\}_{\la\subset R}$ and $\{[\mcI_\la]\}_{\la\subset R}$ relates to each other by
	$[\mcO_\la] = \sum_{\la\subset\mu\subset R} [\mcI_\mu]$
	and that they are dual:
	more precisely $([\mcO_\la],[\mcI_{\ti\mu}])=\de_{\la\mu}$ where
	$\ti\mu=(n-k-\mu_k,\cdots,n-k-\mu_1)$ is the rotated complement of $\mu\subset R$
	and the pairing $(\,,)$ is defined by
	%$\Kth\times\Kho\lra\Kho\lra K_*(*)=\Z\,;\,
	$(\alpha,\beta) = \rho_*(\alpha\otimes\beta)$ 
	where
	%$\rho\colon\Gr(k,n)\lra\{*\}$ 
	$\rho_*$
	is the pushforward to a point.

	%%% K-homology
	%
	The $K$-homology $\Kho$,
	the Grothendieck group of coherent sheaves,
	is naturally isomorphic to $\Kth$. 
	Lam and Pylyavskyy proved in \cite[Theorem 9.16]{MR2377012} that the surjection
	$\La=\bigoplus_{\la\in\PP} \Z g_\la \lra \Kho = \bigoplus_{\mu\subset R} \Z [\mcI_\mu]$
	that maps $g_\la$ to $[\mcI_{\ti\la}]$
	(which is considered as $0$ if $\la\subset R$)
	identifies the coproduct and product on $\La$ with
	the pushforwards of
	the diagonal embedding map
	%$\Delta\colon\Gr(k,n)\lra\Gr(k,n)\times\Gr(k,n)$
	and
	the direct sum map.
	%$\phi\colon\Gr(k_1,n_1)\times\Gr(k_2,n_2)\lra\Gr(k_1+k_2,n_1+n_2)\,;\,(V_1,V_2)\mapsto V_1\oplus V_2$.
	
	Since $\mu\subset\la\iff\tilde\mu\supset\tilde\la$, 
	under this identification we see that
	$\sum_{\mu\subset\la}g_\mu\in\La$ corresponds to $[\mcO_{\tilde\la}]\in\Kho$.

\section{Description for the Pieri coefficients}\label{sect:gs_Pieri}

In this section
we give an explanation for the Pieri coefficients for $G_\la$ \eqref{eq:G:Pieri} and $g_\la$ \eqref{eq:g_Pieri_hs};
their non-leading terms
(higher-degree terms for the case of $G_\la$; lower-degree terms for the case of $g_\la$)
are obtained by taking an alternating sum of meets/joins of the leading terms (\eqref{eq:g:Pieri:altsum} and \eqref{eq:G:Pieri:altsum}).
Another equivalent description is that
the product $\Gs{\la} G_{(a)}$ (resp.\,$\gs{\la} \gs{(a)}$)
is expanded into a certain multiplicity-free sum of $G_\mu$ (resp.\,$g_\mu$) 
(\eqref{eq:gsum} and \eqref{eq:Gs:Pieri:G}).

The key fact is that the coefficients in the Pieri rule
\eqref{eq:G:Pieri} and \eqref{eq:g_Pieri_hs}
are values of the M\"obius functions of certain posets of horizontal strips over $\la$:
for $\la\in\PP$ and $a\in\Z_{>0}$,
let
\begin{gather}
	\hs{\la}=\{\mu\in\PP\mid\text{$\mu/\la$ is a horizontal strip}\}, \\
	\hs[\le a]{\la}=\{\mu\in\hs{\la} \mid |\mu/\la|\le a\}, \qquad
	\whs[\le a]{\la}= \hs[\le a]{\la} \sqcup \{\hat{1}\}, \\
	\hs[\ge a]{\la}=\{\mu\in\hs{\la} \mid |\mu/\la|\ge a\}, \qquad
	\whs[\ge a]{\la}= \hs[\ge a]{\la} \sqcup \{\hat{0}\}.
\end{gather}
Here $\hat{0}$ and $\hat{1}$ are the minimum and maximal elements.
For a poset $P$, let $\mu_P$ denote its M\"obius function
(see Appendix \ref{sect:Prel::Mobius}).
Then we have
\begin{lemm}\label{theo:mobius}
	\noindent $(1)$
	For any $\mu\in\hs[\ge a]{\la}$,
	we have
	$c^{\mu}_{\la,(a)} = - \mu_{\whs[\ge a]{\la}}(\hat{0}, \mu)$.
	That is,
	\begin{gather}
	\sum_{\substack{
			\mu\supset\nu\in\hs[\ge a]{\la}
			%				\text{$\nu/\la$: horizontal strip of size $\ge a$
		}
	}
	c^{\nu}_{\la,(a)}
	= 1.
	\label{eq:c_sum}
	\end{gather}
	
	\noindent $(2)$
	For any $\mu\in\hs[\le a]{\la}$,
	we have
	$d^{\mu}_{\la,(a)} = - \mu_{\whs[\le a]{\la}}(\mu,\hat{1})$.
	That is,
	\begin{gather}
	\sum_{\substack{
			\mu\subset\nu\in\hs[\le a]{\la} 
%			\text{$\nu/\la$: horizontal strip of size $\le a$}
	}}
	d^{\nu}_{\la,(a)}
	= 1.
	\label{eq:d_sum}
	\end{gather}
	
\end{lemm}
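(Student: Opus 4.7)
The plan is to first reduce each assertion to its corresponding sum identity via the defining recursion of the M\"obius function, and then to prove the sum identities \eqref{eq:c_sum} and \eqref{eq:d_sum} directly by a generating-function argument. For (1), the defining recursion in $\whs[\ge a]{\la}$ gives, for each $\mu \in \hs[\ge a]{\la}$,
\[
\mu_{\whs[\ge a]{\la}}(\hat 0, \hat 0) + \sum_{\nu \in \hs[\ge a]{\la},\ \nu \le \mu} \mu_{\whs[\ge a]{\la}}(\hat 0, \nu) = 0,
\]
i.e.\ $\sum_{\nu \in \hs[\ge a]{\la},\ \nu \le \mu} \bigl(-\mu_{\whs[\ge a]{\la}}(\hat 0, \nu)\bigr) = 1$, which is precisely \eqref{eq:c_sum} after substituting $c^\nu_{\la,(a)} \leftrightarrow -\mu_{\whs[\ge a]{\la}}(\hat 0, \nu)$. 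Since both sides of this substitution are determined uniquely by the same recursion inductively along the poset, they agree; the two assertions in (1) are equivalent. The argument for (2) is dual, working in $\whs[\le a]{\la}$ with $\hat 1$.

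To prove \eqref{eq:c_sum}, I fix $\mu \in \hs[\ge a]{\la}$ and parametrize the partitions $\nu$ with $\la \subset \nu \subset \mu$ and $\nu/\la$ a horizontal strip by tuples $(k_i)_{i=1}^{r} \in \prod_{i=1}^{r} [0, b_i]$, where $b_1,\dots,b_r$ are the nonempty row lengths of $\mu/\la$; since $\mu/\la$ is a horizontal strip (so $\la_i \ge \mu_{i+1}$), any such tuple does yield a valid partition. Under this bijection, $|\nu/\la| = \sum k_i$ and $r(\nu/\la)$ is the number of nonzero entries. Stratifying by the support $J = \{i : k_i > 0\}$ (nonempty since $a \ge 1$) and using the identity $(-1)^{s-a}\binom{|J|-1}{s-a} = [y^{s-a}](1-y)^{|J|-1}$ together with the generating function $y^{|J|} \prod_{i \in J} (1-y^{b_i})/(1-y)$ for the configurations with support $J$, the $J$-stratum reduces to $(-1)^{|J|-1}[y^{a-1}]\prod_{i \in J}(1-y^{b_i})/(1-y)$. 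The inclusion-exclusion
\[
\sum_{\varnothing \ne J \subseteq [r]}(-1)^{|J|-1}\prod_{i \in J}(1 - y^{b_i}) = 1 - \prod_{i=1}^{r} y^{b_i} = 1 - y^B, \qquad B = |\mu/\la|,
\]
then collapses the total to $[y^{a-1}](1-y^B)/(1-y) = 1$, valid since $1 \le a \le B$.

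The argument for (2) follows the same template with a slightly subtler parametrization. Given $\mu \in \hs[\le a]{\la}$, set $c_j = \la_{j-1} - \mu_j$ for $j \ge 2$; then the $\nu$'s with $\mu \subset \nu$, $\nu/\la$ a horizontal strip, and $|\nu/\la| \le a$ correspond to tuples $(k_j)_{j \ge 1}$ with $k_1 \ge 0$ unrestricted above, $k_j \in [0, c_j]$ for $j \ge 2$, and $\sum k_j \le A := a - |\mu/\la|$. The key observation is that a row $j \ge 2$ with $c_j > 0$ contributes to $r(\la/\bar\nu)$ precisely when $k_j < c_j$; hence if $J \subseteq E := \{j \ge 2 : c_j > 0\}$ denotes the set of ``saturated'' rows ($k_j = c_j$), then $r(\la/\bar\nu) = |E| - |J| = r(\la/\bar\mu) - |J|$. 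After the same coefficient-extraction stratified by $J$, the telescoping
\[
\sum_{J \subseteq E} \prod_{j \in J} y^{c_j} \prod_{j \in E \setminus J}(1 - y^{c_j}) = \prod_{j \in E}\bigl(y^{c_j} + (1 - y^{c_j})\bigr) = 1
\]
reduces the sum to $[y^A]\,1/(1-y) = 1$, since $A \ge 0$.

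The main obstacle is the bookkeeping in (2): correctly recognizing that $r(\la/\bar\nu)$ counts the \emph{unsaturated} rows of the extension $\nu/\mu$ (rather than simply the number of nonzero $k_j$), and then organizing the sum so that the inclusion-exclusion cleanly telescopes. Once this is set up, both parts reduce to essentially the same one-variable generating-function manipulation.
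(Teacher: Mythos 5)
Your proof is correct, and it follows the same overall skeleton as the paper---reduce the M\"obius-function statements to the sum identities \eqref{eq:c_sum}, \eqref{eq:d_sum} via the defining recursion (this is exactly the ``That is'' in the statement), then verify those identities by a direct computation after coordinatizing horizontal strips row by row---but the computational engine is genuinely different. The paper parametrizes \emph{all} horizontal strips over $\la$ at once by the addable rows (with $n_0=\infty$ for the first row), expresses $|\nu/\la|$, $r(\nu/\la)$, $r(\la/\bar\nu)$ in these coordinates, and then evaluates the resulting multiple sum one variable at a time by the telescoping binomial identity of Lemma \ref{theo:binom}; part (2) is reduced to the same computation as part (1) by the complementation $c_i'=n_i-c_i$ (after replacing $n_0=\infty$ by a large integer). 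You instead coordinatize relative to the fixed $\mu$ (the nonempty rows of $\mu/\la$ in (1), the gaps $c_j=\la_{j-1}-\mu_j$ in (2)), stratify by the support set (resp.\ the set of saturated rows), and evaluate each stratum by coefficient extraction from a one-variable generating function, finishing with inclusion--exclusion, which collapses to $[y^{a-1}](1-y^{B})/(1-y)=1$ in (1) and $[y^{A}]1/(1-y)=1$ in (2). This buys you two things: no auxiliary binomial lemma and no $n_0=\infty$ bookkeeping, and the inclusion--exclusion over supports makes visible the same alternating-sum structure that reappears in Propositions \ref{theo:Pieris} and \ref{theo:G:Pieri}; the paper's route, in exchange, treats (2) as literally the same sum as (1) after complementation. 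One presentational caveat: your displayed stratum value must be read as $\bigl[\prod_{i\in J}(1-y^{b_i})\bigr]\big/(1-y)$, i.e.\ with a \emph{single} factor $1-y$ in the denominator (as follows from $S_J=[y^{a}]\bigl(F_J(y)(1-y^{-1})^{|J|-1}\bigr)$ with $F_J(y)=y^{|J|}\prod_{i\in J}(1-y^{b_i})/(1-y)^{|J|}$), not as $\prod_{i\in J}\bigl[(1-y^{b_i})/(1-y)\bigr]$; only with the former reading does your final inclusion--exclusion step (which pulls the single $1/(1-y)$ out of the sum over $J$) go through, and with it the argument is sound. A similar remark applies in (2), where the single $1/(1-y)$ comes from the unbounded first row; also note that extending the sums to strips of the ``wrong'' size is harmless because the binomial coefficients vanish there, a point worth stating explicitly.
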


Before proving Lemma \ref{theo:mobius}
we show the following propositions.
Let $\la^{(1)}, \la^{(2)}, \cdots$ be the list of all horizontal strips over $\la$ of size $a$.
Then

\begin{prop}\label{theo:Pieris}
	We have
	\begin{align}
%		(1) & &
		\gs{(a)} \gs{\la}
		&=
			\sum_{\mu\subset\la^{(i)} \text{ for $\exists i$}}
				g_{\mu}
				\label{eq:gsum} \\
		&=
			\sum_{i} \gs{\mu^{(i)}}
			- \sum_{i<j} \gs{\mu^{(i)}\cap\mu^{(j)}}
			+ \sum_{i<j<k} \gs{\mu^{(i)}\cap\mu^{(j)}\cap\mu^{(k)}}
			- \cdots,
			\label{eq:gs:Pieri:altsum}
	\end{align}
	and
	\begin{align}
%		(3) & &
		g_{(a)} g_\la 
		&= 
			\sum_{i} g_{\la^{(i)}}
			- \sum_{i<j} g_{\la^{(i)}\cap\la^{(j)}}
			+ \sum_{i<j<k} g_{\la^{(i)}\cap\la^{(j)}\cap\la^{(k)}}
			- \cdots. 
			\label{eq:g:Pieri:altsum}
	\end{align}

\end{prop}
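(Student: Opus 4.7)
The plan is to derive \eqref{eq:gsum} from the Pieri rule~\eqref{eq:g_Pieri_hs} and Lemma~\ref{theo:mobius}(2) by pushing the identity through the ring automorphism $H(1)^\perp$ of Proposition~\ref{theo:H(1):perp}; the remaining two equalities will then follow formally. Since $H(1)^\perp$ is multiplicative with $H(1)^\perp(g_\la)=\gs{\la}$, applying it to \eqref{eq:g_Pieri_hs} yields
\[
\gs{(a)}\gs{\la}
= \sum_{\nu\in\hs[\le a]{\la}} d^{\nu}_{\la,(a)}\,\gs{\nu}
= \sum_\mu g_\mu \sum_{\substack{\nu\in\hs[\le a]{\la}\\ \nu\supset\mu}} d^{\nu}_{\la,(a)},
\]
after expanding $\gs{\nu}=\sum_{\mu\subset\nu}g_\mu$ and interchanging the order of summation. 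It then suffices to show that the inner sum equals $1$ when $\mu\subset\la^{(i)}$ for some $i$, and vanishes otherwise.

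For the vanishing case, observe that every $\nu\in\hs[\le a]{\la}$ can be enlarged to a horizontal strip of size exactly $a$ (e.g.\ by appending cells in a new row), so $\nu\subset\la^{(i)}$ for some $i$; hence the inner sum is empty whenever $\mu\not\subset\la^{(i)}$ for all $i$. For the unit case, fix $\mu$ with $\mu\subset\la^{(i)}$ and set $\mu':=\mu\cup\la$. From $\mu\subset\la^{(i)}$ together with $\la\subset\la^{(i)}$ we obtain $\mu'\subset\la^{(i)}$, hence $\mu'\in\hs[\le a]{\la}$; moreover, for any $\nu\in\hs[\le a]{\la}$ (which satisfies $\nu\supset\la$), the conditions $\nu\supset\mu$ and $\nu\supset\mu'$ coincide. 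Applying Lemma~\ref{theo:mobius}(2) to $\mu'$ then collapses the inner sum to $1$, yielding~\eqref{eq:gsum}.

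The identity \eqref{eq:gs:Pieri:altsum} then follows by inclusion-exclusion on the union $\bigcup_i \{\mu\in\PP\mid \mu\subset\la^{(i)}\}$, using that intersections of these principal order ideals are the order ideals below the meets $\la^{(i_1)}\cap\cdots\cap\la^{(i_k)}$ and that $\gs{\la^{(i_1)}\cap\cdots\cap\la^{(i_k)}}=\sum_{\mu\subset\la^{(i_1)}\cap\cdots\cap\la^{(i_k)}} g_\mu$ by definition. Finally, applying the inverse $(H(1)^\perp)^{-1}$ (which exists by Proposition~\ref{theo:H(1):perp}) to \eqref{eq:gs:Pieri:altsum} and using $(H(1)^\perp)^{-1}(\gs{\nu})=g_\nu$ delivers~\eqref{eq:g:Pieri:altsum}. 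The main obstacle is Lemma~\ref{theo:mobius}(2) itself, which packages the binomial Pieri coefficients of \eqref{eq:g_Pieri_hs} as Möbius values on the posets $\whs[\le a]{\la}$; I treat it as a black box at this stage and defer its verification.
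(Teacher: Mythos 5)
Your argument is correct, and its overall skeleton matches the paper's: both start by applying the ring automorphism $H(1)^\perp$ of Proposition \ref{theo:H(1):perp} to the Pieri rule \eqref{eq:g_Pieri_hs}, both quote Lemma \ref{theo:mobius}(2) (which is indeed proved independently of the proposition, so using it as a black box is legitimate), and both finish with inclusion--exclusion for \eqref{eq:gs:Pieri:altsum} and with $(H(1)^\perp)^{\pm1}$ to pass between \eqref{eq:gs:Pieri:altsum} and \eqref{eq:g:Pieri:altsum}. Where you genuinely diverge is the middle step establishing \eqref{eq:gsum}: the paper reads Lemma \ref{theo:mobius}(2) as the statement $d^{\nu}_{\la,(a)}=-\mu_{\whs[\le a]{\la}}(\nu,\hatone)$, transfers these M\"obius values to the poset $\wh{P}$ (the order ideal generated by the $\la^{(i)}$ with a top element adjoined) via Lemma \ref{theo:MobiusLem}(3), and then applies M\"obius inversion, Lemma \ref{theo:MobiusLem}(1). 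You avoid Lemma \ref{theo:MobiusLem} altogether: expanding $\gs{\nu}=\sum_{\mu\subset\nu}g_\mu$, interchanging the (finite) sums, and evaluating the coefficient of each $g_\mu$ directly from the summation form \eqref{eq:d_sum}, using two elementary observations --- that every $\nu\in\hs[\le a]{\la}$ lies below some $\la^{(i)}$, and that for $\mu$ below some $\la^{(i)}$ the element $\mu'=\mu\cup\la$ lies in $\hs[\le a]{\la}$ and satisfies $\nu\supset\mu\iff\nu\supset\mu'$ for all $\nu\supset\la$. This is a more hands-on substitute for the poset machinery; what it buys is self-containedness (no general M\"obius lemma needed), while the paper's route isolates a reusable abstract statement that it also applies verbatim, with orders reversed, to Proposition \ref{theo:G:Pieri}.

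One small correction: your justification that every $\nu\in\hs[\le a]{\la}$ embeds in some $\la^{(i)}$ ``by appending cells in a new row'' does not always work (a new bottom row may be too short, or may create two cells in one column if $\nu$ already extends below $\la$). The fix is immediate: append the missing $a-|\nu/\la|$ cells to the end of the \emph{first} row, which is unconstrained, keeps $\nu'/\la$ a horizontal strip, and gives $|\nu'/\la|=a$.
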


\begin{prop}\label{theo:G:Pieri}
	We have
	\begin{align}
		G_{(a)} \Gs{\la}
		&=	\sum_{\mu\supset\la^{(i)} \text{ for $\exists i$}}
			G_\mu \label{eq:Gs:Pieri:G} \\
		&=	\sum_{i} \Gs{\la^{(i)}}
			- \sum_{i < j} \Gs{\la^{(i)} \cup \la^{(j)}}
			+ \sum_{i < j < k} \Gs{\la^{(i)} \cup \la^{(j)} \cup \la^{(k)}}
			- \cdots, \label{eq:Gs:Pieri:altsum}
	\end{align}
	and
	\begin{equation}
		G_{(a)} G_{\la}
		=	\sum_{i} G_{\la^{(i)}}
		- \sum_{i < j} G_{\la^{(i)} \cup \la^{(j)}}
		+ \sum_{i < j < k} G_{\la^{(i)} \cup \la^{(j)} \cup \la^{(k)}}
		- \cdots
		\label{eq:G:Pieri:altsum}
	\end{equation}
\end{prop}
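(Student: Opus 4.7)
The plan is to prove the three equalities of Proposition \ref{theo:G:Pieri} in order: first \eqref{eq:Gs:Pieri:G} using Lemma \ref{theo:mobius}(1), then \eqref{eq:Gs:Pieri:altsum} by inclusion--exclusion, and finally \eqref{eq:G:Pieri:altsum} by multiplying by $1-G_1$ and invoking \eqref{eq:H(1):G}. This mirrors the strategy for Proposition \ref{theo:Pieris}, with the ring homomorphism $H(1)^\perp$ replaced by multiplication by $(1-G_1)^{-1}$ on $\widehat\La$.

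\textbf{Proof of \eqref{eq:Gs:Pieri:G}.} Using $\Gs{\la}=(1-G_1)^{-1}G_\la$ from \eqref{eq:H(1):G}, I would rewrite $G_{(a)}\Gs{\la}=(1-G_1)^{-1}(G_{(a)}G_\la)$, expand $G_{(a)}G_\la=\sum_{\nu\in\hs[\ge a]{\la}}c^\nu_{\la,(a)}G_\nu$ via \eqref{eq:G:Pieri}, and reapply \eqref{eq:H(1):G} to each $G_\nu$ to obtain
\[
G_{(a)}\Gs{\la}=\sum_\mu G_\mu\sum_{\substack{\nu\subset\mu\\ \nu\in\hs[\ge a]{\la}}}c^\nu_{\la,(a)}.
\]
The main task is to show the inner sum equals $1$ exactly when $\mu\supset\la^{(i)}$ for some $i$ and vanishes otherwise. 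For $\mu\not\supset\la$ it is vacuous. For $\mu\supset\la$, I would use the lattice-theoretic observation that the set of horizontal strips over $\la$ contained in $\mu$ is closed under entrywise maximum (the max of two such is again one), so it has a unique maximum $\nu_{\max}$ with $(\nu_{\max})_i=\min(\mu_i,\la_{i-1})$ (taking $\la_0=\infty$). Then $\{\nu\subset\mu:\nu\in\hs[\ge a]{\la}\}=\{\nu\subset\nu_{\max}:\nu\in\hs[\ge a]{\la}\}$, and this set is nonempty iff $|\nu_{\max}/\la|\ge a$, iff $\mu\supset\la^{(i)}$ for some $i$. In that case Lemma \ref{theo:mobius}(1) applied to $\nu_{\max}$ in place of $\mu$ gives the inner sum equal to $1$, which proves \eqref{eq:Gs:Pieri:G}.

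\textbf{Deducing the other two formulas.} For \eqref{eq:Gs:Pieri:altsum}, since $\mu\supset\la^{(i_1)}\cup\cdots\cup\la^{(i_r)}$ iff $\mu\supset\la^{(i_s)}$ for every $s$, expanding each $\Gs{\bigcup_s\la^{(i_s)}}=\sum_{\mu\supset\bigcup_s\la^{(i_s)}}G_\mu$ and swapping summation shows that the coefficient of $G_\mu$ on the right side of \eqref{eq:Gs:Pieri:altsum} is $\sum_{\varnothing\ne S\subset\{i:\mu\supset\la^{(i)}\}}(-1)^{|S|+1}$, which equals the indicator of $\{\mu:\mu\supset\la^{(i)}\text{ for some }i\}$ by standard inclusion--exclusion, so it agrees with \eqref{eq:Gs:Pieri:G}. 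Finally, multiplying both sides of \eqref{eq:Gs:Pieri:altsum} by $1-G_1$ and using $(1-G_1)\Gs{\kappa}=G_\kappa$ (from \eqref{eq:H(1):G}) together with the fact that $1-G_1$ commutes with $G_{(a)}$ yields \eqref{eq:G:Pieri:altsum}.

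\textbf{Main obstacle.} The hard part will be the ``arbitrary $\mu$'' reduction in the second paragraph: Lemma \ref{theo:mobius}(1) supplies the required summation identity only when $\mu$ itself lies in $\hs[\ge a]{\la}$, and the decisive observation is that the horizontal strips over $\la$ contained in any fixed $\mu$ possess a componentwise maximum $\nu_{\max}$, allowing the general case to be reduced to the lemma as stated. Once this lattice structure is recognized, everything else is formal manipulation with $(1-G_1)^{\pm 1}$ and inclusion--exclusion.
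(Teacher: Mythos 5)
Your proof is correct, and it follows the same skeleton as the paper for the outer steps: rewrite $G_{(a)}\Gs{\la}=(1-G_1)^{-1}G_{(a)}G_\la$ via \eqref{eq:H(1):G}, expand by the Pieri rule \eqref{eq:G:Pieri}, use Lemma \ref{theo:mobius}(1), get \eqref{eq:Gs:Pieri:altsum} by inclusion--exclusion, and pass to \eqref{eq:G:Pieri:altsum} by multiplying by $1-G_1$. Where you genuinely diverge is the middle step. The paper writes $\Gs{\la}G_{(a)}=\sum_\nu c^\nu_{\la,(a)}\Gs{\nu}$ and then invokes the abstract poset result, Lemma \ref{theo:MobiusLem}(1) and (3) with all orderings reversed: the $\la^{(i)}$ are the atoms of the order filter $Q$ with a $\hat 0$ adjoined, $\whs[\ge a]{\la}$ contains all joins of atoms and is join-closed, so the Pieri coefficients are the M\"obius values $-\mu_{\wh{Q}}(\hat 0,\nu)$ and M\"obius inversion collapses the sum to $\sum_{\mu\in Q}G_\mu$. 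You instead swap summations and verify by hand that the coefficient of each $G_\mu$ is the indicator of $Q$, the key device being the maximal horizontal strip $\nu_{\max}$ over $\la$ contained in $\mu$, with $(\nu_{\max})_i=\min(\mu_i,\la_{i-1})$, which lets you quote \eqref{eq:c_sum} with $\nu_{\max}$ in place of $\mu$. Your route is self-contained (it never needs the appendix Lemma \ref{theo:MobiusLem}) and makes the lattice mechanism concrete; the paper's route is shorter on the page and runs in exact parallel with the proof of Proposition \ref{theo:Pieris}, since the general M\"obius lemma absorbs precisely the join-closure argument you carry out explicitly. Two small points you should make explicit but which are easy: when $|\nu_{\max}/\la|\ge a$ one needs a horizontal strip over $\la$ of size exactly $a$ inside $\nu_{\max}$ (remove boxes one at a time from row ends of $\nu_{\max}/\la$), and the interchange of $(1-G_1)^{-1}$ with the relevant sums is legitimate because the Pieri expansion \eqref{eq:G:Pieri} is finite and $\wh\La=\prod_\la\Z G_\la$ is complete, exactly as in \eqref{eq:H(1):G}.
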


Note that the left-hand side of \eqref{eq:Gs:Pieri:G} is
not $\Gs{(a)}\Gs{\la}$ but $G_{(a)}\Gs{\la}$
while that of \eqref{eq:gsum} is $\gs{(a)}\gs{\la}$,
reflecting the fact that the map $G_\la\mapsto\Gs{\la}$ is a module morphism
while $g_\la\mapsto\gs{\la}$ is a ring morphism.

\begin{rema}
	\eqref{eq:gsum} and \eqref{eq:gs:Pieri:altsum} are mere specializations
	of corresponding results for \emph{affine dual stable Grothendieck polynomials} $\kks{\la}$
	shown in \cite{TakigikuKkSchurSum},
	but here we give another proof since it is easier and also applicable to $G_\la$.
	It is also notable that in the affine case
	(that is, for $\kks{\la}$),
	equations of the form \eqref{eq:gsum} and \eqref{eq:gs:Pieri:altsum} hold
	but \eqref{eq:g:Pieri:altsum} does not.
%	in particular $\kks{\la}\mapsto\kkss{\la}$
%	is not a ring morphism.
	
	In an earlier version of this paper
	\footnote{arXiv:1806.06369v2}
	there was an exposition of the proof of \eqref{eq:gsum} and \eqref{eq:gs:Pieri:altsum} that is adopted from \cite{TakigikuKkSchurSum} and optimized for the non-affine case,
	and by using this and the argument of Lemma \ref{theo:mobius}
	the fact that $g_\la\mapsto\gs{\la}$ is a ring morphism
	was derived.
	Later, a simpler proof for this was found (as given in Section \ref{sect:Ggsums}) and the exposition became unnecessary and therefore has been removed.
\end{rema}

\begin{proof}[Proof of Proposition \ref{theo:Pieris}]
	The right-hand sides of \eqref{eq:gsum} and \eqref{eq:gs:Pieri:altsum} are equal
	by the Inclusion-Exclusion Principle,
	and \eqref{eq:gs:Pieri:altsum} and \eqref{eq:g:Pieri:altsum} are equivalent
	by Proposition \ref{theo:H(1):perp}.
	
	Let $P$ be the order ideal of $\PP$ generated by
	$\{\la^{(1)}, \la^{(2)}, \cdots\}$
	(i.e.\,the set of $\mu\in\PP$ satisfying $\mu\subset\la^{(i)}$ for some $i$)
	and $\wh{P}=P\sqcup\{\hatone\}$ where $\hatone$ is the maximum element.
	Note that
	$\{\la^{(1)}, \la^{(2)}, \cdots\}$
	is the set of coatoms in $\wh{P}$ and
	$\whs[\le a]{\la}$ ($\subset\wh{P}$) is closed under meet.
	Then
	\begin{alignat}{2}
		\gs{\la} \gs{(a)} 
		&= \sum_{\nu} d^{\nu}_{\la,(a)} \gs{\nu} 
			& \qquad&\text{(\eqref{eq:g_Pieri_hs} and Proposition \ref{theo:H(1):perp})} \\
		&= - \sum_{\nu} \mu_{\whs[\le a]{\la}}(\nu,\hat{1}) \gs{\nu} 
			& &\text{(Lemma \ref{theo:mobius} (2))} \\
		&= - \sum_{\nu} \mu_{\wh{P}}(\nu,\hat{1}) \gs{\nu}
			& &\text{(Lemma \ref{theo:MobiusLem} (3))} \\
		&= \sum_{\mu\in P} g_\mu.
			& &\text{(Lemma \ref{theo:MobiusLem} (1))}
	\end{alignat}
	Hence \eqref{eq:gsum} follows.
\end{proof}

\begin{proof}[Proof of Proposition \ref{theo:G:Pieri}]
	Similarly to Proposition \ref{theo:Pieris},
	the equivalence of 
	\eqref{eq:Gs:Pieri:G}, 
	\eqref{eq:Gs:Pieri:altsum} and
	\eqref{eq:G:Pieri:altsum}
	follows
	and we have
	by \eqref{eq:G:Pieri}, \eqref{eq:H(1):G},
	Lemma \ref{theo:mobius} (1) and
	Lemma \ref{theo:MobiusLem} (with all ordering reversed)
	\begin{align}
	\Gs{\la} G_{(a)} 
	&= \sum_{\nu} c^{\nu}_{\la,(a)} \Gs{\nu}
		= \sum_{\mu\in Q} G_\mu,
	\end{align}
	where $Q$ is the order filter of $\PP$
	generated by 
	$\{\la^{(1)}, \la^{(2)}, \cdots\}$,
	i.e.\,the set of $\mu\in\PP$ satisfying $\mu\supset\la^{(i)}$ for some $i$.
	Hence \eqref{eq:Gs:Pieri:G} follows.
\end{proof}

\begin{proof}[Proof of Lemma \ref{theo:mobius}]
Fix $\la\in\PP$.
Let $r_0<r_1<\dots<r_t$ be the row indices
for which rows there are addable corners of $\la$,
i.e.\,$\la_{r_i-1}>\la_{r_i}$
(we consider $\la_0=\infty$, whence $r_0=1$).
Let $n_i=\la_{r_i-1}-\la_{r_i}$,
i.e.\,the number of boxes that can be added to $\la$ in the $r_i$-th row
(we consider $n_0=\infty$).
Then 
\[
	\hs{\la}
	\simeq
	\{(b_0,\dots,b_t)\in\Z^{t+1}\mid 0\le b_i\le n_i\ (\text{for } 0\le i\le t)\},
\]
where $(b_0,\dots,b_t)$ in the right-hand side corresponds to
the partition obtained by adding $b_i$ boxes to $\la$ in the $r_i$-th row.
\[
\tikzitem[0.25]{
	\draw (0,0) -| (13,3) -| (8,5) -| (5,8) -| (0,0);
	\draw (13,0) rectangle +(2,1);
	\draw (8,3) rectangle +(3,1);
	\draw (5,5) rectangle +(2,1);
	\draw (0,8) rectangle +(3,1);

	\draw (0,9) to [out=20, in=160] node (bt)[above]{$b_t$} +(3,0);
	\draw (0,8) to [out=-20, in=-160] node [below]{$n_t$} +(5,0);
	
	\draw (8,4) to [out=20, in=160] node (b1)[above]{$b_1$} +(3,0);
	\draw (8,3) to [out=-20, in=-160] node[below]{$n_1$} +(5,0);
	
	\draw [loosely dotted, thick] (6,8) -- (7.5,6.5);

	\draw (13,1) to [out=20, in=160] node[above]{$b_0$} +(2,0);
	
	\node at (4,2.5) {$\la$};
}
\]

Under this correspondence
$\mu\mapsto (b_0,\dots,b_t)$ and
$\nu\mapsto (c_0,\dots,c_t)$,
we have $\nu\subset\mu\iff c_i\le b_i$ (for all $i$) and
\begin{equation}\label{eq:stats}
|\nu/\la| = \sum_{i=0}^{t} c_i, \qquad
r(\nu/\la) = \sum_{i=0}^{t}\DE{c_i>0}, \qquad
r(\la/\bar\nu) = \sum_{i=1}^{t} \DE{c_i<n_i},
\end{equation}
where we use the notation 
$\DE{P}=1$ if $P$ is true and $\DE{P}=0$ if $P$ is false
for a condition $P$.

Now we prove \eqref{eq:c_sum}.
%Hence, f
For $\mu\in\hs[\ge a]{\la}$ by \eqref{eq:stats} we have
\begin{align}
\text{(LHS of \eqref{eq:c_sum})} 
&=
\sum_{\substack{\nu\in\hs[\ge a]{\la} \\ \nu\subset\mu}}
	(-1)^{|\nu/\la|-a} \binom{r(\nu/\la)-1}{|\nu/\la|-a} \\
&= 
\sum_{0\le c_0\le b_0}
\sum_{0\le c_1\le b_1}
\dots
\sum_{0\le c_t\le b_t}
\DE{\sum_{i=0}^{t} c_i\ge a}
(-1)^{\sum_{i=0}^{t}c_i - a}
\binom{\sum_{i=0}^{t}\DE{c_i>0}-1}{\sum_{i=0}^{t}c_i-a}.
\label{eq:c_sum:totyu}
\intertext{
	Applying Lemma \ref{theo:binom} below
	to simplify the summation on $c_t$,
	we have
}
&= 
\sum_{0\le c_0\le b_0}
\dots
\sum_{0\le c_{t-1}\le b_{t-1}}
\DE{b_t + \sum_{i=0}^{t-1} c_i\ge a}
(-1)^{b_t+\sum_{i=0}^{t-1}c_i - a}
\binom{\sum_{i=0}^{t-1}\DE{c_i>0}-1}{b_t+\sum_{i=0}^{t-1}c_i - a}.
\intertext{
	Repeating this to simplify the summations on $c_{0},\dots,c_{t-1}$,
	we have
}
&= \dots \\
&= \DE{\sum_{i=0}^{t} b_i\ge a} 
(-1)^{\sum_{i=0}^{t}b_i - a}
\binom{-1}{\sum_{i=0}^{t} b_i - a} \\
&= \DE{\sum_{i=0}^{t} b_i\ge a} 
= \DE{|\mu/\la|\ge a} 
= 1.
\end{align}
Hence \eqref{eq:c_sum} is proved.

Next we prove \eqref{eq:d_sum}.
By similar arguments we have
\begin{align}
	\text{(LHS of \eqref{eq:d_sum})} 
	&= 
		\sum_{b_0\le c_0\le n_0}
		\sum_{b_1\le c_1\le n_1}
		\dots
		\sum_{b_t\le c_t\le n_t}
		\DE{\sum_{i=0}^{t} c_i\le a}
		(-1)^{a-\sum_{i=0}^{t}c_i}
		\binom{\sum_{i=1}^{t}\DE{c_i<n_i}}{a-\sum_{i=0}^{t}c_i}.
		\label{eq:d_sum:totyu}
\end{align}
Note that this is actually a finite sum despite $n_0=\infty$,
and we can replace $n_0$ with a sufficiently large positive integer
without changing the value of \eqref{eq:d_sum:totyu}.
Noticing $\DE{c_0<n_0}=1$ for any $c_0$ that contributes to the summation \eqref{eq:d_sum:totyu},
and letting $b'_i = n_i-b_i$, $c'_i=n_i-c_i$ and $a'=(\sum_{i=0}^{t}n_i)-a$,
we have
\begin{align}
	\eqref{eq:d_sum:totyu}
	&= 
	\sum_{0\le c'_0\le b'_0}
	\sum_{0\le c'_1\le b'_1}
	\dots
	\sum_{0\le c'_t\le b'_t}
	\DE{\sum_{i=0}^{t} c'_i\ge a'}
	(-1)^{\sum_{i=0}^{t}c'_i - a'}
	\binom{\sum_{i=0}^{t}\DE{c'_i>0}-1}{\sum_{i=0}^{t}c'_i-a'}.
	\intertext{
		Since this summation is of the same form as \eqref{eq:c_sum:totyu}, 
		by the same arguments we have
	}
	&=
	\DE{\sum_{i=0}^{t}b'_i\ge a'}
	= \DE{\sum_{i=0}^{t}b_i\le a} = \DE{|\mu/\la|\le a} = 1.
\end{align}
Hence \eqref{eq:d_sum} is proved.
\end{proof}

\begin{lemm}\label{theo:binom}
	For $R,q,b, b'\in\mathbb{Z}$ with $b'\le b$, we have
	\[
		\sum_{b'\le x\le b}
			\DE{x\ge R} 
			(-1)^{x-R}
			\binom{q+\DE{x>b'}}{x-R}
		= \DE{b\ge R} (-1)^{b-R} \binom{q}{b-R},
	\]
%	\[
%		\sum_{b\le x\le n}
%		\DE{R\ge x} 
%		(-1)^{R-x}
%		\binom{q+\DE{x<n}}{R-x}
%		= \DE{R\ge b} (-1)^{R-b} \binom{q}{R-b},
%	\]
	where we use the notation
	$\DE{P}=1$ if $P$ is true and $\DE{P}=0$ if $P$ is false
	for a condition $P$.
\end{lemm}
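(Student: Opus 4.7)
The plan is to expand the inflated binomial coefficient $\binom{q + \DE{x > b'}}{x - R}$ via Pascal's identity and use an index shift to produce a telescoping cancellation. Since $\binom{q+1}{k} - \binom{q}{k} = \binom{q}{k-1}$ holds for every integer $k$ (with the usual convention $\binom{q}{-1} = 0$), one has
\[
    \binom{q + \DE{x > b'}}{x - R} = \binom{q}{x - R} + \DE{x > b'} \binom{q}{x - R - 1}
\]
for every $x$ in the summation range, since the second term vanishes when $x = b'$ and reproduces Pascal when $x > b'$. Substituting this into the left-hand side splits it as $A + C$, where $A$ involves $\binom{q}{x - R}$ and runs over $b' \le x \le b$, and $C$ involves $\binom{q}{x - R - 1}$ and runs over $b' < x \le b$.

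Next I would make the substitution $y = x - 1$ in $C$. This converts $\binom{q}{x - R - 1}$ into $\binom{q}{y - R}$, replaces the range $b' < x \le b$ by $b' \le y \le b - 1$, and changes $(-1)^{x - R}$ into $-(-1)^{y - R}$. The indicator $\DE{x \ge R}$ becomes $\DE{y \ge R - 1}$, but the additional potential term at $y = R - 1$ contributes $\binom{q}{-1} = 0$, so $\DE{y \ge R-1}$ may be replaced by $\DE{y \ge R}$ without affecting the sum. After these manipulations $C$ equals $-1$ times the truncation of $A$ to $b' \le x \le b - 1$.

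Consequently $A + C$ telescopes to the single surviving term $x = b$ of $A$, namely $\DE{b \ge R}(-1)^{b - R}\binom{q}{b - R}$, which is the claimed right-hand side. I do not expect a real obstacle here; the only points that need a moment of care are the indicator bookkeeping at $y = R - 1$ (handled by $\binom{q}{-1} = 0$) and the degenerate case $b = b'$, where $C$ is empty and $A$ consists only of $x = b' = b$, giving the right-hand side immediately.
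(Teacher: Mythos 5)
Your proof is correct and is essentially the paper's argument in non-inductive form: the paper proceeds by induction on $b-b'$, where the inductive step is exactly the Pascal-type cancellation $-\DE{b'\ge R}\binom{q}{b'-R}+\DE{b'+1\ge R}\binom{q+1}{b'+1-R}=\DE{b'+1\ge R}\binom{q}{b'+1-R}$ merging the two lowest terms, while you apply the same Pascal identity to every term simultaneously and telescope via the shift $y=x-1$. The underlying cancellation (and the bookkeeping with $\binom{q}{-1}=0$ and the case $b=b'$) is the same, so there is no substantive difference.
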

\begin{proof}
	We carry induction on $b-b'$.
	The lemma is clear when $b'=b$.
	When $b'<b$,
	it is easy to check
	\[
		- \DE{b'\ge R} \binom{q}{b'-R}
		+ \DE{b'+1\ge R} \binom{q+1}{b'+1-R}
		= \DE{b'+1\ge R} \binom{q}{b'+1-R}.
	\]
%	\[
%		\DE{R\ge n-1} \binom{q+1}{R-n+1}
%		- \DE{R\ge n} \binom{q}{R-n}
%		= \DE{R\ge n-1} \binom{q}{R-n+1}.
%	\]
	Hence we can replace $b'$ with $b'+1$, completing the proof.
\end{proof}

%%%%%%%%%%%%%%%%%%%%%%%%%%%%%%%%%%%%%%%% Appendix

\appendix

\section{M\"obius function of a poset}\label{sect:Prel::Mobius}

For basic definitions for posets
we refer the reader to \cite[Chapter 3]{MR2868112}.

For a locally finite (i.e.\,every interval is finite) poset $P$,
the \textit{M\"obius function} $\mu_{P}(x,y)$ 
(for $x,y\in P$ with $x\le y$) is characterized by
\[
\sum_{x\le z\le y} \mu_{P}(x,z) = \delta_{xy} \quad\text{for any $x\le y$},
\]
or equivalently
\begin{equation}\label{eq:mobius}
\sum_{x\le z\le y} \mu_{P}(z,y) = \delta_{xy} \quad\text{for any $x\le y$}.
\end{equation}

\begin{lemm}\label{theo:MobiusLem}
	Let $\wh{P}$ be a locally finite poset with the maximum element $\hatone$.
	Let $P=\wh{P}\sm\{\hatone\}$
	and $\{x_1,\cdots,x_n\}$ be the maximal elements in $P$,
i.e.\,the coatoms in $\wh{P}$.
	Consider formal variables $\{g(s)\mid s\in \wh{P}\}$ and 
	let 
	$\widetilde{g}(t)=\sum_{s\le t} g(s)$
	for $t\in \wh{P}$.
	
	\noindent $(1)$
	We have
	\begin{align}
		\sum_{s\in P} g(s) 
		= - \sum_{s\in P} \mu_{\wh{P}}(s,\hatone) \wt{g}(s). \label{eq:mobius_gsum2}
	\end{align}
	
	\noindent $(2)$
	Assume that $P$ admits the meet operation $\wedge$.
	Then
	\begin{align}
		\sum_{s\in P} g(s) &= 
		\sum_{m\ge 1} (-1)^{m-1}
		\sum_{i_1<\dots<i_m}
		\wt{g}(x_{i_1}\wedge\cdots\wedge x_{i_m}) \\
		\bigg(&=
		\sum_{i} \wt{g}(x_i)
		- \sum_{i<j} \wt{g}(x_i\wedge x_j)
		+ \sum_{i<j<k} \wt{g}(x_i\wedge x_j\wedge x_k)
		- \dots, \bigg)
	\end{align}

	\noindent $(3)$
	In the same situation as $(2)$,	
	$\mu_{\wh{P}}(s,\hatone) = 0$
	unless $s$ is of the form $s=x_{i_1}\wedge\dots\wedge x_{i_l}$,
	and
	\begin{equation}\label{eq:mobius_PP}
	\mu_{\wh{P}}(s,\hatone) = \mu_{\wh{P}'}(s,\hatone)
	\end{equation}
	for any subposet $\wh{P}'$ of $\wh{P}$ that contains
	all elements of the form
	$x_{i_1}\wedge\dots\wedge x_{i_l}$
	(including $\hatone$ as the meet of an empty set).
	
\end{lemm}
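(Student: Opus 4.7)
The plan is to prove (1) by Möbius inversion, (2) by inclusion–exclusion, and (3) by comparing the two resulting formulas followed by a short downward induction.

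For (1): Möbius inversion in the incidence algebra of $\wh{P}$ is available by local finiteness, and from $\wt{g}(t) = \sum_{s\le t}g(s)$ it gives $g(t)=\sum_{s\le t}\mu_{\wh{P}}(s,t)\,\wt{g}(s)$. Specialising at $t=\hatone$ and using $\wt{g}(\hatone)-g(\hatone)=\sum_{s\in P}g(s)$ rearranges immediately to \eqref{eq:mobius_gsum2}.

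For (2): because $\wh{P}$ is locally finite with maximum $\hatone$, every $s\in P$ lies below some coatom (any maximal element of $[s,\hatone]$ strictly below $\hatone$ is one), so $P=\bigcup_i A_i$ with $A_i=\{s\in P:s\le x_i\}$. Since $P$ admits meets, $A_{i_1}\cap\cdots\cap A_{i_m}=\{s\in P:s\le x_{i_1}\wedge\cdots\wedge x_{i_m}\}$, and this meet lies in $P$ because each $x_{i_j}<\hatone$. Applying inclusion–exclusion to $P=\bigcup_iA_i$ yields the claimed formula.

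For (3): I would first compare (1) and (2). The transition $g(s)\mapsto\wt{g}(s)$ is unitriangular along any linear extension of $\le$, so $\{\wt{g}(s)\}_{s\in P}$ is linearly independent in the free module on $\{g(s)\}_{s\in\wh{P}}$. Matching coefficients of each $\wt{g}(s)$ gives
\[
\mu_{\wh{P}}(s,\hatone)=\sum_{m\ge 1,\ x_{i_1}\wedge\cdots\wedge x_{i_m}=s}(-1)^m,
\]
which vanishes unless $s$ is a meet of coatoms. For the invariance $\mu_{\wh{P}}(s,\hatone)=\mu_{\wh{P}'}(s,\hatone)$ I would induct downward on $s$, measured by $|[s,\hatone]\cap\wh{P}|$. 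The case $s=\hatone$ is trivial; for $s<\hatone$ the Möbius recurrence gives $\mu_{\wh{P}}(s,\hatone)=-\sum_{s<t\le\hatone,\,t\in\wh{P}}\mu_{\wh{P}}(t,\hatone)$. The first assertion confines the nonzero contributions to those $t$ that are meets of coatoms, all of which lie in $\wh{P}'$ by hypothesis; the induction hypothesis then turns this into $-\sum_{s<t\le\hatone,\,t\in\wh{P}'}\mu_{\wh{P}'}(t,\hatone)$, which equals $\mu_{\wh{P}'}(s,\hatone)$ by definition.

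The main obstacle is the second half of (3): since $\wh{P}'$ is not assumed to admit meets, one cannot simply rerun the (1)+(2) argument inside $\wh{P}'$. The induction above sidesteps this by invoking only the first half of (3) for the ambient poset $\wh{P}$, together with the observation that all meets of coatoms are already present in $\wh{P}'$.
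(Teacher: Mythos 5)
Your proof is correct and follows essentially the same route as the paper: M\"obius inversion for (1), inclusion--exclusion over the principal order ideals of the coatoms for (2), and (3) deduced by comparing the two expansions (the paper itself only says ``(3) follows from (1) and (2)'', so your coefficient-matching plus downward induction on $|[s,\hatone]|$ simply supplies the details it omits). The only step you leave implicit is that in the induction the terms $t\in\wh{P}'$ that are not meets of coatoms also drop out of the $\wh{P}'$-recurrence, but this follows immediately from your induction hypothesis combined with the first half of (3), so it is a fillable one-line remark rather than a gap.
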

\begin{proof}
	It is known (see \cite[Proposition 3.7.1]{MR2868112} for example) that
	\begin{equation}\label{eq:mobius_inv}
	g(t)=\sum_{s\le t} \mu_{\wh{P}}(s,t) \widetilde{g}(s)
	\quad\text{(for $\forall t\in \wh{P}$)}.
	\end{equation}
	Hence we have
	\begin{align}
	\sum_{s\in P} g(s) 
	= \widetilde{g}(\hatone) - g(\hatone) %\notag \\
	= \wt{g}(\hatone) - \sum_{s\in\wh{P}} \mu_{\wh{P}}(s,\hatone) \wt{g}(s) 
	%		\notag \\
	= - \sum_{s\in P} \mu_{\wh{P}}(s,\hatone) \wt{g}(s), \label{eq:mobius_gsum}
	\end{align}
	proving (1).	
%	\noindent 
	(2) is by the Inclusion-Exclusion Principle.
	(3) follows from (1) and (2).
\end{proof}

\bibliographystyle{abbrv}
\input{main.bbl}

\end{document}